\documentclass[10pt]{amsart}

\usepackage{graphicx}
\usepackage{amsopn}
\usepackage{color}
\usepackage{enumitem}
\usepackage{amsmath}
\usepackage{amssymb}
\usepackage{hyperref}
\usepackage{algorithmic}
\usepackage{algorithm}
\usepackage[small,bf]{caption}

\definecolor{disc}{RGB}{255, 0, 0}
\definecolor{alg}{RGB}{255, 191, 0}
\definecolor{diffeqn}{RGB}{128, 255, 0}
\definecolor{funcanal}{RGB}{0, 255, 64}
\definecolor{geotop}{RGB}{0, 255, 255}
\definecolor{probstat}{RGB}{0, 64, 255}
\definecolor{compsciinfo}{RGB}{128, 0, 255}
\definecolor{phys}{RGB}{255, 0, 191}

\theoremstyle{plain}
\newtheorem{theorem}{Theorem}[section]
\newtheorem{lemma}[theorem]{Lemma}




\def\df{\it}

\def\Z{\mathbb{Z}}
\def\smax{s_{\rm max}}
\def\smin{s_{\rm min}}
\def\cv{CV}
\def\revpdf{\mathrm{revpdf}}
\def\revcdf{\mathrm{revcdf}}

\DeclareMathOperator{\Var}{Var}

\title[The \(S\)-metric and preferential attachment]{The \(S\)-metric, the Beichl--Cloteaux approximation, and preferential attachment}

\author[Cory Brunson]{Jason Cory Brunson}

\address{Virginia Bioinformatics Institute, Virginia Tech, Blacksburg, VA 24060}
\email{jabrunso@vbi.vt.edu}

\subjclass[2000]{05C80 (68R10 91D30)}

\thanks{Brianna Richardson, Monisha Narayan, Antonio McInness, and Steve Fassino dove into this project with me at the outset, Christopher Franck of LISA has offered facilitative suggestions throughout, and Brian Cloteaux offered helpful comments.
This work grew out of the 2010 REU in Modeling and Simulation in Systems Biology and was partially supported by NSF Award:477855.}

\begin{document}

\begin{abstract}
This paper presents an efficient algorithm to approximate the \(S\)-metric, which normalizes a graph's assortativity by its maximum possible value.
The algorithm is used to track in detail the assortative structure of growing preferential attachment trees, and to study the evolving structure and preferential attachment of several mathematics coauthorship graphs.
These graphs' behavior belies expectations; implications are discussed.


\smallskip
\noindent \textsc{Keywords.} assortative mixing, scale-freeness, random graph algorithms, collaboration networks, evolving networks
\end{abstract}

\maketitle

\section{Background}

In the context of graphs, the term ``scale-free'' originally referred to the property of having a scaling degree sequence, but came over time to connote further properties typically associated with high degree--degree correlations, such as self-similarity, that are not entailed by such a degree sequence \cite{ladw-towards}.
In 2005 Li et al \cite{ladw-towards} reconstituted the term in an attempt to reign in this ambiguity.
In the simplest case, let \(G(D)\) denote the collection of connected, undirected graphs having degree sequence \(D=(d_1\geq\cdots\geq d_n)\).
It is generally assumed that \(D\) is scaling, i.e. that
\[d_k\approx Ck^{-\alpha}\text{, or}\ k\approx C{d_k}^{-\alpha},\]
for some constants \(C\) and \(\alpha\).
Li et al introduced the {\df \(s\)-metric}
\[s(g)=\sum_{(i,j)\in E(g)}d_id_j\]
on graphs \(g\in G(D)\), where \(E(g)\) consists of the edges of \(g\).
To position a graph within \(G(D)\) by its \(s\)-metric, they posited graphs \(g_\ast\) and \(g^\ast\) in \(G(D)\), not necessarily unique, of minimal and maximal \(s\)-metrics \(\smin\) and \(\smax\), respectively.
They used the {\df \(S\)-metric}
\[\displaystyle S(g)=s(g)/\smax\]
to draw comparisons among graphs of different sizes and degree sequences, which Li \cite{l-topologies} normalized to
\[S(g)=\frac{s(g)-\smin}{\smax-\smin}\text.\]
It was shown in \cite{ladw-towards} that \(s:G(D)\to\Z_{\geq0}\) is related to Newman's \cite{n-mixing} normalized degree correlation coefficient \(r:G(D)\to[-1,1]\) by a scalar factor depending on \(G(D)\), suggesting the interpretation
\[S(g)=\frac{r(g)-r(g_\ast)}{r(g^\ast)-r(g_\ast)}\]
of the \(S\)-metric as a normalization of \(r\) over \(G(D)\).

Li et al adopted the arithmetic approximation
\begin{equation}\label{sminapprox}
\smin\gtrsim\frac12\sum_{i=1}^nd_id_{n+1-i}
\end{equation}
but constructed \(g^\ast\) in order to compute \(\smax\).
Their construction is not guaranteed to terminate, but if it does then the resulting graph is guaranteed to be \(g^\ast\).
Beichl and Cloteaux \cite{bc-generating} introduced a deterministic, terminating algorithm to construct a graph \(g'\) approximating \(s(g')\gtrsim\smax\).
Their \(g'\) has degree sequence \(D\) but is not necessarily connected; trials suggest that \(\lim_{n\to\infty}{(s(g')-\smax)}/{\smax}=0\) for power law degree sequences.
The algorithm proceeds from scratch, hence requires \(O(n^2)\) iterations of a test for the graphicality of the degree sequence.

A popular test for graphicality is due to Erd\H{o}s and Gallai \cite{eg-graphs}.
It states that there exists a graph \(g\) with degree sequence \(D\) if and only if
\begin{equation}\label{graphicality1}
\sum_{i=1}^{k}d_i\leq k(k-1)+\sum_{i=k+1}^{n}\min(k,d_i)
\end{equation}
across \(1\leq k\leq n-1\).
Tripathi and Vijay \cite{tv-note} proved that it is enough to perform the test once for each distinct number in \(D\):
Let \(\Sigma=(\sigma_1\leq\cdots\leq\sigma_d)\) be a cumulative histogram for \(D\), beginning with the number \(\sigma_1\) of instances of top degree, so that \(d=\max(D)\).
If \(D=(d_1\geq\cdots\geq d_n)\) then \(d_{\sigma_j}\geq d-j+1>d_{\sigma_j+1}\) for each \(j\) (allowing for \(\sigma_j=\sigma_{j+1}\) if \(j\notin D\)).
Then we need only check (\ref{graphicality1}) for \(k=\sigma_j\).
That is, \(D\) is graphical if and only if
\begin{equation}\label{graphicality2}
\sum_{i=1}^{j}(\sigma_i-\sigma_{i-1})(d-i+1)\leq\sigma_j(\sigma_j-1)+\sum_{i=j+1}^d(\sigma_i-\sigma_{i-1})\min(\sigma_j,d-i+1)
\end{equation}
across \(1\leq j\leq d\), where we take \(\sigma_0\) to be zero.
In this note we take advantage of this test to implement a rapid construction of \(g'\).

\section{An improved algorithm}

The Beichl--Cloteaux construction (BC) begins with \(D=D(g)\) and an empty graph \(h\) on \(n\) nodes.
Each step of the construction adds an edge to \(h\) corresponding to a pair of entries of \(D\) that are then reduced by \(1\) each.
Therefore, as \(h\) grows from the empty graph to \(g'\), \(D\) decays from \(D(g)\) to a sequence of zeros.
While Erd\H os--Gallai achieves runtime \(O(d)\) \cite{ilms-quick}, it requires that \(D\) be sorted, a condition violated by BC.
Repairing this by a linear sorting algorithm achieves a runtime \(O(n^3)\) for BC.

The present algorithm instead achieves runtime \(O(n^2d)\).
Among graphs with degree sequences \(D\) that satisfy a scaling law \(P(k)\sim k^{-\alpha}\) (\(\alpha\geq1\)), we get \(\max(D)=O(n^{1/(\alpha-1)})\), which implies a runtime of \(O(n^{(2\alpha-1)/(\alpha-1)})\).
The key is an avatar \(\Phi=(\phi_1,\ldots,\phi_d)\) for \(\Sigma\) on which two operations are computationally inexpensive:
\begin{enumerate}[label={(\roman*)}]
\item adjust \(\Phi\) to reflect the addition of an edge to \(h\) (Lemma~\ref{step});
\item test \(\Phi\) for the graphicality of the remaining \(D\) (Lemma~\ref{graphicality}).
\end{enumerate}
The algorithm adjusts \(D\) along the way.

\subsection{Graphicality and the definition of \(\Phi\)}

The left side of (\ref{graphicality2}) simplifies to
\begin{equation}\label{graphicality3}
\sum_{i=1}^{j}(\sigma_i-\sigma_{i-1})(d-i+1) = \sigma_j(d+1) - \sum_{i=1}^{j}(\sigma_i-\sigma_{i-1})i\text.
\end{equation}
The sum on the right side of (\ref{graphicality3}) decomposes into two summands if \(\sigma_j\leq d-j-1\):
\begin{align*}
& \sum_{i=j+1}^d(\sigma_i-\sigma_{i-1})\min(\sigma_j,d-i+1) \\
&= \left\{\begin{array}{ll}
\displaystyle\sum_{i=j+1}^{d-\sigma_j}(\sigma_i-\sigma_{i-1})(d-i+1) + \sum_{i=d-\sigma_j+1}^d(\sigma_i-\sigma_{i-1})\sigma_j
& d-\sigma_j\geq j+1 \\
\displaystyle\sum_{i=j+1}^d(\sigma_i-\sigma_{i-1})\sigma_j
& j+1>d-\sigma_j
\end{array}\right\} \\
&= \left\{\begin{array}{ll}\displaystyle
(\sigma_{d-\sigma_j}-\sigma_j)(d+1) - \sum_{i=j+1}^{d-\sigma_j}(\sigma_i-\sigma_{i-1})i + (\sigma_d-\sigma_{d-\sigma_j})\sigma_j
& d-\sigma_j\geq j+1 \\
(\sigma_d-\sigma_j)\sigma_j
& j+1>d-\sigma_j
\end{array}\right\}
\end{align*}
A sum is understood to vanish if the starting value for \(i\) exceeds the ending value; if \(i\leq0\) then we take \(\sigma_i=0\), and if \(i>d\) then we take \(\sigma_i=\sigma_d\). Reorganizing (\ref{graphicality2}) and combining like terms produces
\begin{equation}\label{graphicality2a}
\sigma_j(\sigma_d-\sigma_{d-\sigma_j}+\sigma_j-1)
+ (\sigma_{d-\sigma_j}-2\sigma_j)(d+1)
+ \sum_{i=1}^{j}(\sigma_i-\sigma_{i-1})i
- \sum_{i=j+1}^{d-\sigma_j}(\sigma_i-\sigma_{i-1})i
\ \geq\ 0
\end{equation}
when \(d-\sigma_j\geq j+1\) and
\begin{equation}\label{graphicality2b}
(\sigma_d-1)\sigma_j
- \sigma_j(d+1)
+ \sum_{i=1}^{j}(\sigma_i-\sigma_{i-1})i
\ \geq\ 0
\end{equation}
when \(j+1>d-\sigma_j\).
This proves the following lemma.
\begin{lemma}\label{graphicality}
Define \(\phi_j\) across \(1\leq j\leq d\) by (\ref{graphicality2a}) if \(d-\sigma_j\geq j+1\) and by (\ref{graphicality2b}) if \(j+1>d-\sigma_j\).
Then \(D\) is graphical if and only if each \(\phi_j\geq0\).
\end{lemma}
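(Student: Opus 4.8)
The plan is to read the lemma as the reversible algebraic bookkeeping that converts the Tripathi--Vijay form (\ref{graphicality2}) of the Erd\H{o}s--Gallai criterion \cite{eg-graphs} into the statement ``$\phi_j\ge 0$ for all $j$''. Since \cite{tv-note} already gives that $D$ is graphical if and only if (\ref{graphicality2}) holds for every $1\le j\le d$, it suffices to show that, for each fixed $j$, the $j$-th instance of (\ref{graphicality2}) is equivalent to $\phi_j\ge 0$. Because $\phi_j$ is obtained from (\ref{graphicality2}) purely by moving terms across the inequality, this reduces to checking that every manipulation en route is an identity, so that no solutions of the inequality are gained or lost.

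Concretely I would carry out three moves for a fixed $j$. First, rewrite the left side of (\ref{graphicality2}) with the telescoping identity underlying (\ref{graphicality3}): using $d-i+1=(d+1)-i$ together with $\sum_{i=a}^{b}(\sigma_i-\sigma_{i-1})=\sigma_b-\sigma_{a-1}$ gives $\sum_{i=a}^{b}(\sigma_i-\sigma_{i-1})(d-i+1)=(\sigma_b-\sigma_{a-1})(d+1)-\sum_{i=a}^{b}(\sigma_i-\sigma_{i-1})i$, which is (\ref{graphicality3}) in the case $a=1$, $b=j$. Second, split the sum $\sum_{i=j+1}^{d}(\sigma_i-\sigma_{i-1})\min(\sigma_j,d-i+1)$ at the index where the quantity $d-i+1$, decreasing in $i$, crosses the constant $\sigma_j$, namely $i=d-\sigma_j$: on the block $j+1\le i\le d-\sigma_j$ one has $d-i+1>\sigma_j$, so the minimum equals $\sigma_j$ and the block telescopes to $\sigma_j(\sigma_{d-\sigma_j}-\sigma_j)$; on the block $d-\sigma_j+1\le i\le d$ one has $d-i+1\le\sigma_j$, so the minimum equals $d-i+1$ and, by the identity from the first move, the block equals $(\sigma_d-\sigma_{d-\sigma_j})(d+1)-\sum_{i=d-\sigma_j+1}^{d}(\sigma_i-\sigma_{i-1})i$. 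When $d-\sigma_j<j+1$ the first block is empty, the sum collapses to the second block, and the stated conventions ($\sigma_i=0$ for $i\le 0$, $\sigma_i=\sigma_d$ for $i>d$, empty sums equal to zero) keep the reading consistent. Third, substitute both sides back into (\ref{graphicality2}), collect the coefficient of $d+1$, the coefficient of $\sigma_j$, and the two surviving index-weighted partial sums, and move everything to one side; the result reproduces the quantity $\phi_j$ of the statement, matching (\ref{graphicality2a}) when $d-\sigma_j\ge j+1$ and (\ref{graphicality2b}) when $j+1>d-\sigma_j$. Reversing the chain of equalities recovers (\ref{graphicality2}), so the equivalence is genuine and the lemma follows.

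The step I expect to be the real sticking point is the branch analysis of the minimum in the second move: one has to nail down precisely which of $\sigma_j$ and $d-i+1$ is the smaller on each of the two index blocks, confirm that the crossover index is $d-\sigma_j$ rather than a neighbour of it, and keep the edge conventions mutually consistent so that the boundary term at $i=d-\sigma_j+1$ is counted exactly once and the two cases agree along the seam $d-\sigma_j=j$. An off-by-one slip there silently breaks the equivalence even though the resulting inequality still looks plausible. Everything else is routine term-collecting, and no combinatorial input beyond (\ref{graphicality2}) itself is needed.
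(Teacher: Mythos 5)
Your plan is the paper's plan---reduce graphicality to (\ref{graphicality2}) via Tripathi--Vijay, rewrite the left side with the telescoping identity behind (\ref{graphicality3}), split the min-sum at the crossover index, and collect terms---but the one step you leave unverified, the claim that the collected expression ``reproduces the quantity $\phi_j$ of the statement,'' is precisely the step that does not go through. Your branch analysis of the minimum is the correct one: for $j+1\le i\le d-\sigma_j$ one has $d-i+1\ge\sigma_j+1$, so $\min(\sigma_j,d-i+1)=\sigma_j$, and for $i\ge d-\sigma_j+1$ the minimum is $d-i+1$. But the derivation preceding the lemma uses the opposite assignment ($d-i+1$ on the first block, $\sigma_j$ on the second, and $\sigma_j$ throughout when $j+1>d-\sigma_j$), and it is that assignment which produces (\ref{graphicality2a}) and (\ref{graphicality2b}). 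Carrying out your (correct) split instead yields the genuine slack of (\ref{graphicality2}); for instance, in the case $j+1>d-\sigma_j$ it gives $\sigma_j(\sigma_j-1)+(\sigma_d-\sigma_j)(d+1)-\sum_{i=j+1}^{d}(\sigma_i-\sigma_{i-1})i-\sigma_j(d+1)+\sum_{i=1}^{j}(\sigma_i-\sigma_{i-1})i$, which is not the quantity in (\ref{graphicality2b}). So you cannot simultaneously keep your min analysis and land on the displayed formulas; the ``matching'' must be checked, and it fails.

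The mismatch is substantive rather than cosmetic. Take $D=(3,3,1,1)$, which is not graphical: here $d=3$ and $\Sigma=(2,2,4)$, every $j$ falls in the case $j+1>d-\sigma_j$, and (\ref{graphicality2b}) evaluates to $\phi_1=\phi_2=0$ and $\phi_3=4$, all nonnegative, whereas the true slack of (\ref{graphicality2}) at $j=1$ (Erd\H{o}s--Gallai at $k=\sigma_1=2$) is $4-6=-2$. Thus the quantities literally defined by (\ref{graphicality2a})/(\ref{graphicality2b}) are not equivalent to graphicality, which is why your derivation, being based on the correct branch of the minimum, cannot reach them. To prove the lemma's intended content, define $\phi_j$ as the slack $\sigma_j(\sigma_j-1)+\sum_{i=j+1}^{d}(\sigma_i-\sigma_{i-1})\min(\sigma_j,d-i+1)-\sum_{i=1}^{j}(\sigma_i-\sigma_{i-1})(d-i+1)$ (this is exactly what Algorithm~\ref{BCD} initializes); the equivalence with graphicality is then immediate from Erd\H{o}s--Gallai together with Tripathi--Vijay, and your first two moves give correct closed forms for it, namely (\ref{graphicality2a})/(\ref{graphicality2b}) with the roles of $\sigma_j$ and $d-i+1$ interchanged on the two blocks. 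As written, though, your proposal asserts a term-matching that would not survive the computation you yourself flag as the sticking point.
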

Set \(\Phi=(\phi_1,\ldots,\phi_d)\) and call \(\Phi\) {\df graphical} if each \(\phi_j\geq0\).
Instead of generating \(\Phi\) at each step in the algorithm, we will adjust the \(\phi_i\) directly using knowledge of the degrees of the nodes to be linked at each step.

\subsection{\(\Phi\) and the decay of \(D\)}

While Lemma~\ref{graphicality} provides an inexpensive graphicality test for \(\Phi\), the construction of \(\Phi\) from \(D\) is more expensive than the Tripathi--Vijay test for \(D\).
Improving the efficiency of BC requires a rule for updating \(\Phi\) directly to reflect the addition of an edge to \(h\), or more directly the decrease of two entries of \(D\) by one.
Lemma~\ref{step} provides such an update rule for decreasing a single entry of \(D\).
In the lemma, \(\mathbf{1}_P\) denotes the indicator function for \(P\).

\begin{lemma}\label{step}
Take \(D=(d_1,\ldots,d_n)\) to be a (not necessarily nonincreasing) degree sequence and define \(\Sigma=(\sigma_1\leq\cdots\leq\sigma_d)\) and \(\Phi=(\phi_1,\ldots,\phi_d)\) from \(D\) as above.
Pick \(i\) with \(d_i=k\), set \(D'=(d_1,\ldots,d_{k-1},d_k-1,d_{k+1},\ldots,d_n)\), and define \(\Sigma'\) and \(\Phi'\) from \(D'\) as above.
Across \(1\leq j\leq d\) let
\[\Lambda_j=\Bigg\{\begin{array}{ll}
-\mathbf{1}_{k+\sigma_j>d} & j<k \\
(d-k+1) - 2(\sigma_j-1) + \min(\sigma_k-1,d-k) - \max(0,\sigma_{d-\sigma_k+1}-\sigma_k) & j=k \\
1 & j>k
\end{array}\]
and set \(\Lambda=(\lambda,\ldots,\lambda)\).
Then
\begin{align*}
\Sigma' &= (\sigma_1\leq\cdots\leq\sigma_k-1\leq\sigma_d)\ \ \text{and} \\
\Phi' &= \Phi+\Lambda\text.
\end{align*}
\end{lemma}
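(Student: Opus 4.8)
The statement splits into two claims, the first of which is immediate. By definition $\sigma_j$ counts the entries of $D$ of value at least $d-j+1$; lowering a single entry from $k$ to $k-1$ removes it from exactly one threshold class, the one counting entries of value at least $k$, so exactly one coordinate of $\Sigma$ drops by $1$ while the others — and $d'=\max(D')=d$, under the tacit assumption that the lowered entry is not the unique maximum — are unchanged. I would dispose of this in a line or two and then concentrate on $\Phi'=\Phi+\Lambda$.

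For the second claim the plan is to write $\Sigma'$ as $\Sigma$ with its relevant coordinate lowered by one, substitute into the closed forms (\ref{graphicality2a}) and (\ref{graphicality2b}) for each $\phi'_j$, and subtract $\phi_j$. It helps to keep in mind the conceptual origin of $\phi_j$ as the Erd\H os--Gallai slack (right side minus left side of (\ref{graphicality1})) evaluated at the Tripathi--Vijay pivot $m=\sigma_j$, namely $\sigma_j(\sigma_j-1)+\sum_{i>\sigma_j}\min(\sigma_j,d_i)-\sum_{i\le\sigma_j}d_i$ on the sorted sequence: the top-$\sigma_j$ sum drops by $1$ exactly when the lowered entry lies among the $\sigma_j$ largest, while $\sigma_j(\sigma_j-1)$ and the truncated sum feel the perturbation only through the one coordinate of $\Sigma$ that moved and through the shift of the pivot index $d-\sigma_j$ by at most one. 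Pushing these effects back through the telescoping identity (\ref{graphicality3}) and the min-split that produced (\ref{graphicality2a})--(\ref{graphicality2b}) is what yields $\Lambda_j$.

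I would organize the algebra by the position of $j$ relative to the changed coordinate $k$. For $j>k$ only the sums $\sum_{i=1}^{j}(\sigma_i-\sigma_{i-1})i$ and $\sum_{i=j+1}^{d-\sigma_j}(\sigma_i-\sigma_{i-1})i$ are disturbed; the perturbation changes the first difference at the changed index by $-1$ and at the next index by $+1$, and inside $\sum_{i=1}^{j}$ these telescope to the constant $\Lambda_j=1$. For $j<k$ the same two changes now sit in the truncated sum, where they telescope to $-1$ and survive only when the pivot $d-\sigma_j$ falls between them — precisely the event $k+\sigma_j>d$ flagged by the indicator, so $\Lambda_j=-\mathbf 1_{k+\sigma_j>d}$. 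The case $j=k$ is the laborious one: here $\sigma_k$ itself decreases, so the quadratic term $\sigma_k(\sigma_d-\sigma_{d-\sigma_k}+\sigma_k-1)$, the $(d+1)$-linear term, the upper limit $d-\sigma_k$ of the second sum, and possibly the active branch between (\ref{graphicality2a}) and (\ref{graphicality2b}) all move together; expanding and collecting these contributions is what assembles the four-term expression for $\Lambda_k$.

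The main obstacle is exactly the bookkeeping around the min/max and the data-dependent limit $d-\sigma_j$: lowering $\sigma_j$ by one can move the pivot, push the changed coordinate across a summation boundary, or flip which of (\ref{graphicality2a}), (\ref{graphicality2b}) governs $\phi_j$ versus $\phi'_j$, and each such coincidence must be checked to confirm the stated $\Lambda_j$ still holds — in particular when $d-\sigma_k$ or $d-\sigma_k+1$ equals $k$, when one of the sums is empty, and at the extremes $k=1$ and $k=d$. I expect that in every such configuration the terms $\min(\sigma_k-1,d-k)$ and $\max(0,\sigma_{d-\sigma_k+1}-\sigma_k)$ in $\Lambda_k$ are precisely what absorb the branch change, so that no residual case analysis of (\ref{graphicality2a}) versus (\ref{graphicality2b}) leaks into the other $\phi_j$; verifying this, together with the routine expansion via (\ref{graphicality3}) and collection of like terms, completes the proof.
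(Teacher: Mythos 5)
Your overall strategy is the same as the paper's: reduce to the single change \(\sigma'_k=\sigma_k-1\), split into the cases \(j>k\), \(j<k\), \(j=k\), and compute \(\phi'_j-\phi_j\) by direct perturbation; your accounting for \(j>k\) (a telescoped \(+1\)) and for \(j<k\) (a \(-1\) exactly when \(k+\sigma_j>d\)) agrees with the paper's. The genuine gap is that the case \(j=k\) --- which is the entire substance of the lemma, since the four-term expression for \(\Lambda_k\) is exactly what must be established --- is never carried out: you say that ``expanding and collecting these contributions is what assembles'' \(\Lambda_k\) and that you ``expect'' the \(\min\) and \(\max\) terms to absorb the boundary coincidences, but you give no derivation. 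In the paper this is the long computation in which the updated sum \(\sum_{i=k+1}^d(\sigma'_i-\sigma'_{i-1})\min(\sigma'_k,d-i+1)\) is compared with the original: the altered difference at \(i=k+1\) contributes \(+\min(\sigma_k-1,d-k)\), and converting \(\min(\sigma_k-1,d-i+1)\) back to \(\min(\sigma_k,d-i+1)\) by splitting at \(i=d-\sigma_k+1\) contributes \(-\max(0,\sigma_{d-\sigma_k+1}-\sigma_k)\); these, together with the \(+(d-k+1)\) from the left-hand side and the \(-2(\sigma_k-1)\) from the quadratic term, assemble \(\Lambda_k\). Without some such computation the \(j=k\) formula is asserted, not proved.

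A secondary point about your starting formulas: you propose to substitute into the closed forms (\ref{graphicality2a})--(\ref{graphicality2b}), whereas the paper's proof deliberately stays with the unsplit representation \(\phi_j=\sigma_j(\sigma_j-1)+\sum_{i=j+1}^d(\sigma_i-\sigma_{i-1})\min(\sigma_j,d-i+1)-\sigma_j(d+1)+\sum_{i=1}^j(\sigma_i-\sigma_{i-1})i\), with the left side rewritten via (\ref{graphicality3}). That choice is not cosmetic: keeping the \(\min\) inside the sum means no branch of (\ref{graphicality2a}) versus (\ref{graphicality2b}) can flip between \(\phi_j\) and \(\phi'_j\), so most of the boundary bookkeeping you list as the ``main obstacle'' simply never arises outside \(j=k\). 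Be warned also that the displayed split leading to (\ref{graphicality2a})--(\ref{graphicality2b}) appears to have the two ranges of \(\min(\sigma_j,d-i+1)\) interchanged (for \(i\leq d-\sigma_j\) the minimum is \(\sigma_j\), not \(d-i+1\)), so a literal substitution into those displays will not reproduce the stated \(\Lambda_j\) even for \(j<k\); you should instead work from the Erd\H{o}s--Gallai slack at the pivot \(\sigma_j\), which you correctly identify as the conceptual origin of \(\phi_j\) and which is the form the paper's proof (and Algorithm~\ref{BCD}) actually uses.
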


\begin{proof}
That \(\sigma'_k=\sigma_k-1\) and \(\sigma'_j=\sigma_j\) for \(j\neq k\) follows from the definition of \(\Sigma\). We further need to show that
\begin{enumerate}[label=({\alph*})]
\item\label{T->k}
if \(j>k\) then \(\phi'_j=\phi_j+1\);
\item\label{T-k}
\(\phi'_k=\phi_k+(d-k+1)-2(\sigma_k-1) +\min(\sigma_k-1,d-k)-\max(0,\sigma_{d-\sigma_k+1}-\sigma_k)\); and
\item\label{T-<k}
if \(j<k\) then \(\phi'_j=\phi_j-\mathbf{1}_{k+\sigma_j>d}\).
\end{enumerate}
We will derive the changes to the terms of (\ref{graphicality1}) that follow from the substitution \(\sigma'_k=\sigma_k-1\) and put them together to obtain the substitutions above.

The left sum in (\ref{graphicality1}) at \(j=k\) becomes
\begin{align*}
\sigma'_k(d+1)-\sum_{i=1}^{k}(\sigma'_i-\sigma'_{i-1})i
&= (\sigma_k-1)(d+1)-\sum_{i=1}^{k-1}(\sigma_i-\sigma_{i-1})i-((\sigma_k-1)-\sigma_{k-1})k \\
&= \sigma_k(d+1)-\sum_{i=1}^{k}(\sigma_i-\sigma_{i-1})i-(d-k+1)\text{,}
\end{align*}
the rightmost term being the change from the original. Since we moved the left sum to the other side of the inequality to produce \(\Phi\), the difference becomes the (positive) first term in \ref{T-k}. For \(j<k\) there is no change, and for \(j>k\) we get
\begin{align*}
\sigma'_j(d+1)-\sum_{i=1}^{j}(\sigma'_i-\sigma'_{i-1})i
&= \sigma_j(d+1)-\sum_{i\neq k,k+1}(\sigma_i-\sigma_{i-1})i \\
&\phantom{=}\ -((\sigma_k-1)-\sigma_{k-1})k-(\sigma_{k+1}-(\sigma_k-1))(k+1) \\
&= \sigma_j(d+1)-\sum_{i=1}^{j}(\sigma_i-\sigma_{i-1})i-1\text{,}
\end{align*}
providing \ref{T->k}.

The first summand on the right of the inequality (\ref{graphicality2}) remains unchanged except at \(j=k\), where
\[\sigma'_k(\sigma'_k-1)=(\sigma_k-1)(\sigma_k-2)=\sigma_k(\sigma_k-1)-2(\sigma_k-1)\text{,}\]
which provides the second summand in \ref{T-k}. This term remains unchanged for \(j>k\). For \(j<k\) we get \(\sigma'_k-\sigma'_{k-1}=\sigma_k-\sigma_{k-1}-1\) and \(\sigma'_{k+1}-\sigma'_k=\sigma_{k+1}-\sigma_k+1\) while otherwise \(\sigma'_i-\sigma'_{i-1}=\sigma_i-\sigma_{i-1}\), so the sum itself may only change if \(\min(\sigma_j,d-k+1)\neq\min(\sigma_j,d-k)\), i.e. when \(k+\sigma_j>d\). In this case the change is \((d-k)-(d-k+1)=-1\). This provides \ref{T-<k}.

At \(j=k\) we only notice the change \(\sigma'_{k+1}-\sigma'_k=\sigma_{k+1}-\sigma_k+1\) among those for \(j>k\), but we also get \(\min(\sigma'_k,d-i+1)=\min(\sigma_k-1,d-i+1)\), which matters when \(\sigma_k=d-i+1\), i.e. \(i=d-\sigma_k+1\). Together, these produce
\begin{align*}
& \sum_{i=k+1}^d(\sigma'_i-\sigma'_{i-1})\min(\sigma'_k,d-i+1) \\
&= (\sigma_{k+1}-\sigma_k+1)\min(\sigma_k-1,d-k)
+ \sum_{i=k+2}^d(\sigma_i-\sigma_{i-1})\min(\sigma_k-1,d-i+1) \\
&= (\sigma_{k+1}-\sigma_k)\min(\sigma_k-1,d-k)
+ \min(\sigma_k-1,d-k) \\
&\phantom{=} + \sum_{i=k+2}^d(\sigma_i-\sigma_{i-1})\min(\sigma_k-1,d-i+1) \\
&= \sum_{i=k+1}^d(\sigma_i-\sigma_{i-1})\min(\sigma_k-1,d-i+1)
+ \min(\sigma_k-1,d-k) \\
&= \mathop{\sum_{i=k+1}^d}_{i>d-\sigma_k+1}(\sigma_i-\sigma_{i-1})\min(\sigma_k,d-i+1) \\
&\phantom{=} + \mathop{\sum_{i=k+1}^d}_{i\leq d-\sigma_k+1}(\sigma_i-\sigma_{i-1})\min(\sigma_k-1,d-i+1)
+ \min(\sigma_k-1,d-k) \\
&= \mathop{\sum_{i=k+1}^d}_{i>d-\sigma_k+1}(\sigma_i-\sigma_{i-1})(d-i+1)
+ \mathop{\sum_{i=k+1}^d}_{i\leq d-\sigma_k+1}(\sigma_i-\sigma_{i-1})(\sigma_k-1) \\
&\phantom{=} + \min(\sigma_k-1,d-k) \\
&= \mathop{\sum_{i=k+1}^d}_{i>d-\sigma_k+1}(\sigma_i-\sigma_{i-1})(d-i+1)
+ \mathop{\sum_{i=k+1}^d}_{i\leq d-\sigma_k+1}(\sigma_i-\sigma_{i-1})\sigma_k
- \mathop{\sum_{i=k+1}^d}_{i\leq d-\sigma_k+1}(\sigma_i-\sigma_{i-1}) \\
&\phantom{=} + \min(\sigma_k-1,d-k) \\
&= \sum_{i=k+1}^d(\sigma_i-\sigma_{i-1})\min(\sigma_k,d-i+1)
- \mathop{\sum_{i=k+1}^d}_{i\leq d-\sigma_k+1}(\sigma_i-\sigma_{i-1})
+ \min(\sigma_k-1,d-k) \\
&= \sum_{i=k+1}^d(\sigma_i-\sigma_{i-1})\min(\sigma_k,d-i+1)
- \max(0,\sigma_{d-\sigma_k+1}-\sigma_k)
+ \min(\sigma_k-1,d-k)\text{,}
\end{align*}
again interpreting \(\sigma_i=0\) for \(i<1\) and \(\sigma_i=\sigma_d\) for \(i>d\), which provides the remaining terms of \ref{T-k}.
\end{proof}

\subsection{The algorithm}

We invoke the reverse cumulative density function \(\revcdf:D\mapsto\Sigma\) and the function \(\Theta\) from \cite{bc-generating} defined by \(\Theta(D,n_1,n_2)=D-e_{n_1}-e_{n_2}\) --- that is, by reducing by one the entries of \(D\) in the \({n_1}^\text{th}\) and \({n_2}^{\text{th}}\) positions.
Algorithm~\ref{drop} (\(\mathrm{Drop}\)) implements the update mechanism of Lemma~\ref{step} and Algorithm~\ref{BCD} (\(\mathrm{BCD}\)) incorporates \(\mathrm{Drop}\) and Lemma~\ref{graphicality} into the construction of \(g'\).

\begin{algorithm}
\caption{\(\mathrm{Drop}(\Sigma,\Phi,m)\): Update \(\Sigma\) and \(\Phi\) to reflect decreasing the degree of one node from \(m\) to \(m-1\).}
\label{drop}
\algsetup{indent = 2em}
\begin{algorithmic}[1]
\REQUIRE \((\Sigma=(\sigma_1,\ldots,\sigma_d),\Phi=(\phi_1,\ldots,\phi_d),m)\)
\ENSURE \(1\leq m\leq d\)
\STATE \(\ell \leftarrow \min(d - \sigma_m + 1, d)\)
\FOR{\(j=\min(\text{index \(j\) such that \(\sigma_j>d-m\)}),\ldots,m-1\)}
\STATE \(\phi_j \leftarrow \phi_j - 1\)
\ENDFOR
\STATE \(\phi_m \leftarrow \phi_m + d - m + 1 - 2(\sigma_m - 1) + \min(\sigma_m - 1, d - m)\)
\IF{\(m < \ell\)}
\STATE \(\phi_m \leftarrow \phi_m - (\sigma_\ell - \sigma_m)\)
\ENDIF
\FOR{\(j=m+1,\ldots,d\)}
\STATE \(\phi_j \leftarrow \phi_j + 1\)
\ENDFOR
\STATE \(\sigma_m \leftarrow \sigma_m - 1\)
\RETURN \((\Sigma,\Phi)\)
\end{algorithmic}
\end{algorithm}

\begin{algorithm}
\caption{\(\mathrm{BCD}(D)\): Construct an edge list for \(g'\) from a graphical degree sequence \(D\).}
\label{BCD}
\algsetup{indent = 2em}
\begin{algorithmic}[1]
\STATE \(n \leftarrow |D|\)
\STATE \(d \leftarrow \max(D)\)
\STATE \(\Sigma \leftarrow \revcdf(D)\)
\FOR{\(i=1,\ldots,d\)}
\STATE \(\phi_i \leftarrow \sigma_i(\sigma_i - 1) - \sum_{j=1}^{i}(\sigma_j-\sigma_{j-1})(d-j+1) + \sum_{j=i+1}^{d}(\sigma_j-\sigma_{j-1})\min(\sigma_i,d-j+1)\)
\ENDFOR
\STATE \(\Phi \leftarrow (\phi_1,\ldots,\phi_d)\)
\STATE \(E \leftarrow \{\}\)
\STATE \(n_1 \leftarrow 1\)
\STATE \(n_2 \leftarrow 2\)
\WHILE{\(d_{n_1},\ldots,d_n>0\)}
\WHILE{\(d_{n_1}>0\)}
\STATE \((\Sigma',\Phi') \leftarrow \mathrm{Drop}(\Sigma,\Phi,d - d_{n_1} + 1)\)
\STATE \((\Sigma',\Phi') \leftarrow \mathrm{Drop}(\Sigma',\Phi',d - d_{n_2} + 1)\)
\IF{\(\phi_1,\ldots,\phi_d\geq0\)}
\STATE \(E \leftarrow E\cup\{(n_1,n_2)\}\)
\STATE \(D \leftarrow \Theta(D,n_1,n_2)\)
\STATE \((\Sigma,\Phi) \leftarrow (\Sigma',\Phi')\)
\ENDIF
\ENDWHILE
\STATE \(n_2 \leftarrow n_2 + 1\)
\ENDWHILE
\end{algorithmic}
\end{algorithm}

\section{Values of \(s\) for simulated and empirical networks}

\subsection{Range of \(s\)-values across trees}

With a (more) rapid algorithm at our disposal, we examine the range of values of \(s(g)\) across \(G(D)\).
We first construct trees using the Barab\'asi--Albert (BA) preferential attachment model \cite{ba-emergence}, which we describe in detail in the next subsection.
The trees contain \(2^p\) nodes each, where \(3\leq p\leq14\).
We specifically consider the relationship between the coefficient of variation in \(D\), given by \(CV=\frac{\sqrt{\Var(D)}}{E(D)}\), and the ratio \(s(g^\ast)/s(g_\ast)\), in order to draw comparisons to \cite{l-topologies} Fig.~4.2.
We approximate \(\smin\) using (\ref{sminapprox}).

\begin{figure}
\centerline{\includegraphics[width = \columnwidth]{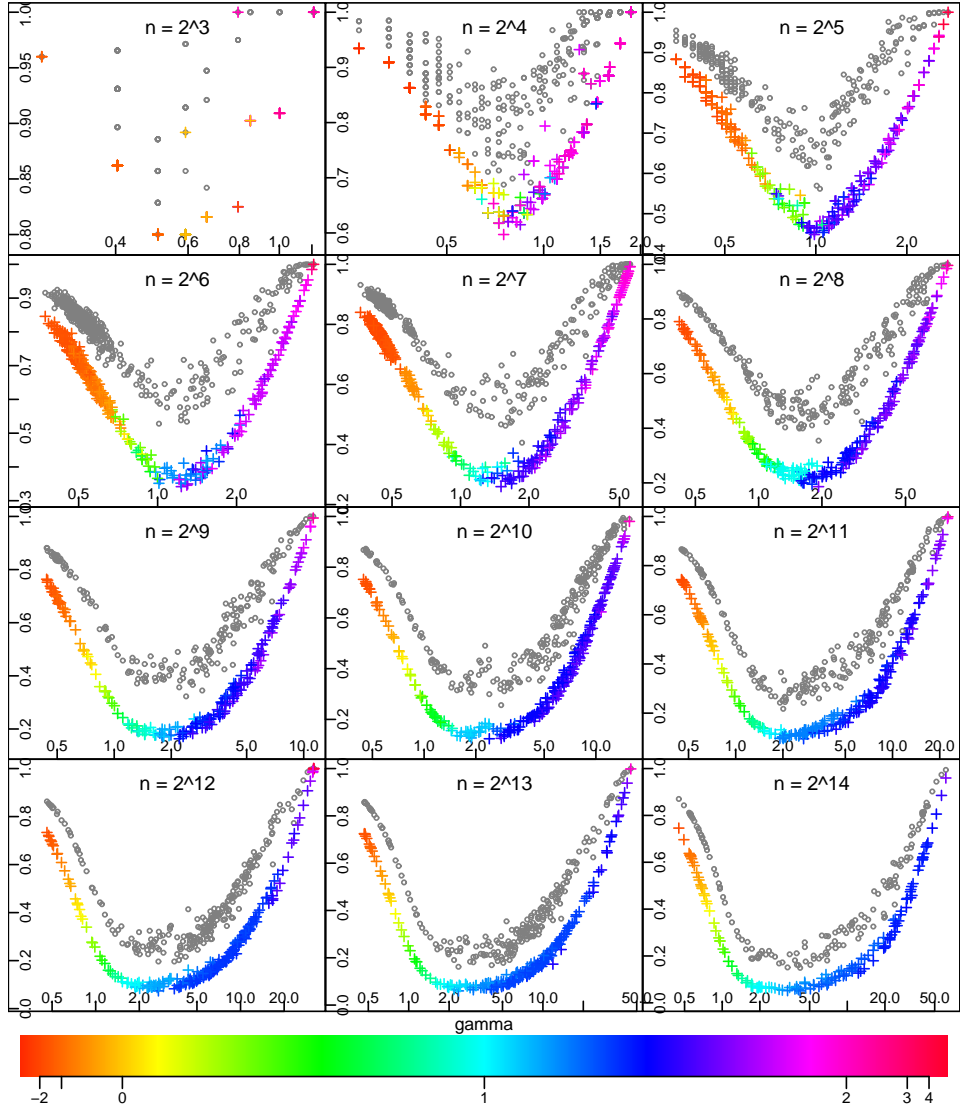}}
\caption{Plots of the ratio \(s(g_\ast)/s(g^\ast)\) (colored \(+\)'s) and the \(S\)-metric \(s(g)/s(g_\ast)\) (grey \(\circ\)'s) against the coefficient of variation \(\cv(D)\) across a sampling of trees, scaled logarithmically on the horizontal axis.
\(+\)'s are colored by \(\gamma\) according to the color bar, which follows the Riemann projection of \(\mathbb{R}\) onto a circle of radius \(\frac14\) resting at \(\gamma=1\).
We sampled with feedback to fill gaps near the center of each \(\cv\) range.\label{ratioCVBA}}
\end{figure}

Fig.~\ref{ratioCVBA} plots, for each of 100 trees of each size, the \(s\)-metric of each tree normalized by \(\smax\) for its degree sequence (grey \(\circ\)'s) together with \(\smin\) similarly scaled (colored \(+\)'s).
While \(\smin=\smax\) at both ends of the possible \(CV\) range (chains, being the unique graphs with degree sequence \(D=(2,\cdots,2,1,1)\); and stars, ditto \(D=(n-1,1,\ldots,1)\)), stars are easily achieved using preferential attachment with large exponent \(\gamma\), but chains are rare for comparatively large negative \(\gamma\). This is to be expected:
The probability of achieving a star with \(\gamma>0\) is
\[\prod_{i=3}^{n-1}\frac1{1+(i-1)^{1-\gamma}}\text,\]
while that of achieving a chain with \(\gamma<0\) is
\[\prod_{i=3}^{n-1}\frac1{1+(i-2)2^{-1-\gamma}}\text,\]
which converges much more slowly despite both limits being \(1\) as \(\gamma\to\infty\).

The plots are logarithmically-scaled on the \(x\)-axis; to-scale plots of this data reveal what appears to be an asymptotically linear relationship between \(\smin/\smax\) and \(CV\) away from zero.
The logarithmic \(x\)-axis reveals more clearly the impact of \(\gamma\) on \(CV\) and \(s\), specifically that \(\gamma\) and \(CV\) increase together while exponents \(\gamma\approx1\) tend to minimize both \(s(g)/\smax\) and \(\smin/\smax\), i.e.\ produce the greatest diversity of possible \(S\)-values.
This occurs within a narrow range of \(\gamma\) values.

\subsection{Range of \(s\)-values across collaboration networks}

In a forthcoming paper \cite{bfmnrfil-evolutionary}, we examine the collaboration network of mathematicians using the {\df Mathematical Reviews} ({\it MR}) database across 1985--2009.
We study the coauthorship graph across 5-year intervals, as nodes appear and disappear according as corresponding authors begin and cease publishing.
In addition to the aggregate network, we construct and trace coauthorship graphs for a ``pure'' and an ``applied'' network formed from splitting the AMS subject classifications in half (between 58 and 60), and for other, subject-specific subnetworks.

\begin{figure}
\centerline{\includegraphics[width = \columnwidth]{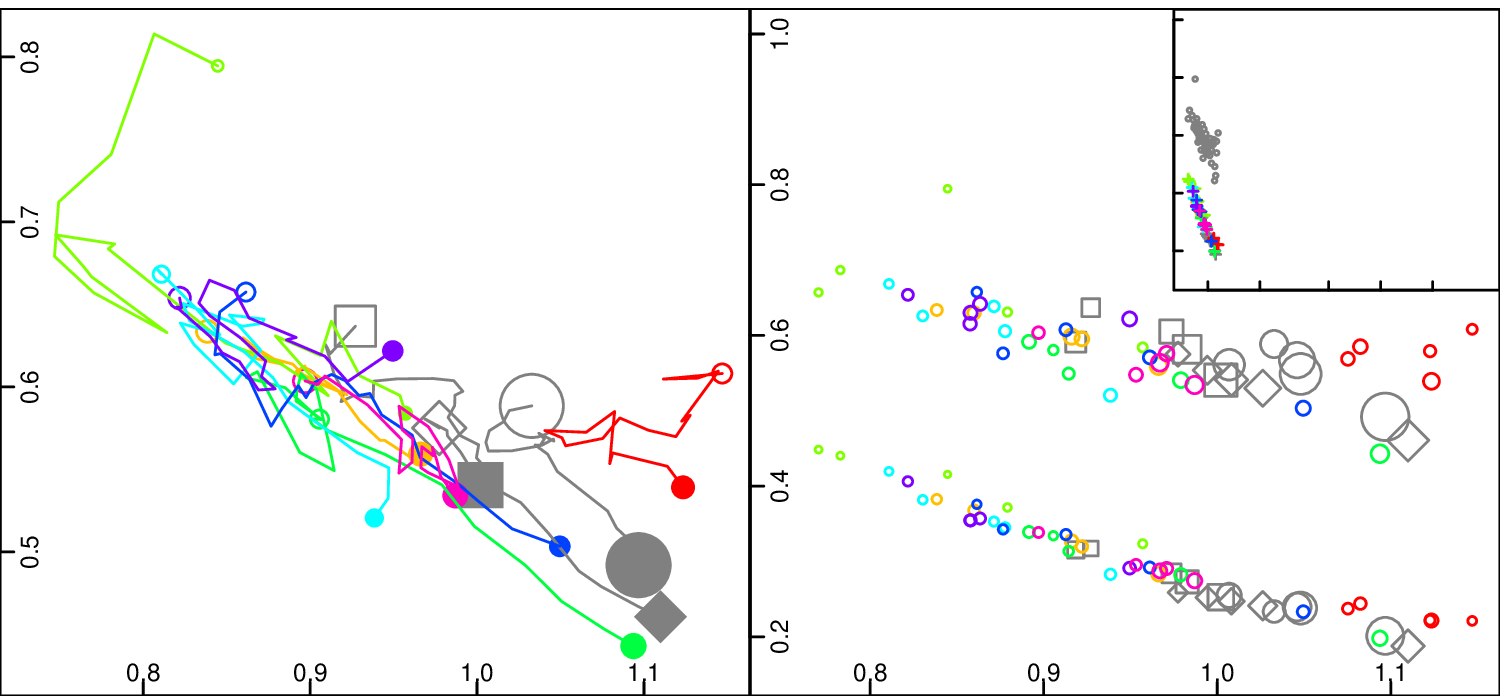}}
\caption{(a) Plot of \(s/\smax\) against the coefficient of variation \(\cv(D)\) with points for the first (open) and last (filled) 5-year intervals and segments connecting the 20 pairs of values for sequential (overlapping) intervals between them:
the aggregate MR network, the pure and applied subnetworks (circles, squares, and diamonds, respectively; all grey), and networks constructed from eight subdisciplines.
(b) Plot of \(s/\smax\) and \(\smin/\smax\) for the same (sub)networks along nonoverlapping 5-year windows across.
The insert depicts \(\smin/\smax\) in a plot with logarithmic \(x\)-axis, \(x\) ranging up to \(\sqrt{\hat{n}}/2\) with \(\hat{n}\) the geometric mean of the sizes of the graphs, and \(y\) ranging from \(.1\) to \(1\), to draw comparisons to Fig.~\ref{ratioCVBA}.\label{ratioCVMR}}
\end{figure}

Fig.~\ref{ratioCVMR} involves eight subdisciplines in addition to the aggregate, pure, and applied {\it MR} networks, determined by their associated MSC classifications and distinguished by color as follows: discrete ({\color{disc}\(\bullet\)}; 03--06), algebra ({\color{alg}\(\bullet\)}; 08--22), differential equations ({\color{diffeqn}\(\bullet\)}; 34--35), functional analysis ({\color{funcanal}\(\bullet\)}; 40--47), geometry and topology ({\color{geotop}\(\bullet\)}; 51--58), probability and statistics ({\color{probstat}\(\bullet\)}; 60--62), computer science and information ({\color{compsciinfo}\(\bullet\)}; 68, 94), and classical physics ({\color{phys}\(\bullet\)}; 70--86).
Fig.~\ref{ratioCVMR}~(a) traces the \(s\)-values of these coauthorship graphs across five adjacent 5-year intervals, along with the minimum \(s\)-values possible under the same degree sequence.
All values are normalized by the maximum \(s\)-value for the degree sequence.
The ordered pairs \((\cv(D),\smin(D)/\smax(D))\) (the insert) occupy a similar stretch of the parameter space to the BA random trees in Fig.~\ref{ratioCVBA} near \(\gamma=0\).
The coincidence of the \(\smin/\smax\) plots across distinct disciplines and various numbers of authors suggests that the degree distributions of collaboration networks are constrained to a limited range of \(\cv\).
The evident, but weaker, coincidence of the \(s/\smax\) plots suggests that, as with BA trees, the topological structure of collaboration networks is subject to constraints beyond those imposed by degree sequence.

The overall trend of the degree sequences of these coauthorship graphs is toward greater variance (Fig.~\ref{ratioCVMR} (a)).
This corresponds in the case of BA trees to increasing the preferential attachment exponent \(\gamma\), as evident from Fig.~\ref{ratioCVBA}.
The question then arises of whether the observed trends can be explained by strengthened preferential attachment in the evolving network \cite{bjnrsv-evolution}.

\section{Preferential attachment}

The mechanism of preferential attachment is perhaps most simply illustrated by the Barab\'asi--Albert tree:
Beginning with a single node \(v_0\) at time \(t=0\), let time proceed at integer steps.
At the \(n^\text{th}\) step, introduce a ``new'' node \(v_n\), then link \(v_n\) to a previous, ``existing'' node \(v_i\), subject to a probability distribution \(P(v_i)\).
The result is a tree whose topology is largely determined by the distribution \(P(v_i)\).
The constraint \(P(v_i)=\Pi(k_i)\), depending only on the degree \(k_i\) of node \(v_i\), constitutes (degree-based) preferential attachment.
If we let \(n_k\) denote the number of existing nodes having degree \(k\) then the probability that node \(v_n\) links to {\sl some} node of degree \(k\) is given by
\begin{equation}\label{prefatt}
P_k=\sum_{k_i=k}P(v_i)=n_k\Pi(k)\text.
\end{equation}
The Barab\'asi--Albert tree arises out of the probability distribution determined by
\begin{equation}\label{batree}
\Pi(k)\propto k^\gamma\text,
\end{equation}
with the case \(\gamma=1\) called {\it linear preferential attachment}.
Formula (\ref{batree}) describes the most common flavor of preferential attachment in the literature.

\(P_k\) is sensitive to the degree sequence of the existing graph, as shown in (\ref{prefatt}), while even \(\Pi(k)=P_k/n_k\) will differ for fixed \(k\) across graphs of different sizes.
Both are then ill-suited to comparing preferntial attachment mechanisms between different networks or over time as a single network evolves.
For this we want to consider the influence of having degree \(k\) on a node's probability of being linked.
This is measured as the factor \(R_k\) defined by \(P_k=R_k\cdot n_k/n\), where \(n=\sum_kn_k\) \cite{n-clustering,tl-empirical}.
\(R_k\) has been called the ``relative probability'' for degree \(k\), though it is not a probability measure in the sense that \(0\leq R_k\leq 1\); \(R_k\) may take any nonnegative value.
From (\ref{prefatt}) we derive \(R_k=n\Pi(k)\) and \(\sum_{i=1}^nR_{k_i}=n\), hence the average value of \(R_k\) across nodes is, suitably, \(1\).

\subsection{Measuring preferential attachment in evolving networks}

Even if a graph grows according to the preferential attachment mechanism, there are difficulties in determining this and in estimating \(R_k\).
Such a graph changes degree sequence with every new node, and the low number of high-degree nodes (especially under a scaling degree sequence) produces high variability in the frequency with which they form links with new nodes.
All this makes a histogrammic approach problematic, but the method of Jeong et al \cite{jnb-measuring} takes one anyway, mitigating these concerns asymptotically.

Consider a graph \(G(T)\) growing over time \(T\geq T_0\).
Designate times \(T_2>T_1>T_0\) with \(\Delta T=T_2-T_1\ll T_1-T_0\).
Refer to nodes, edges, and the incidences between them (of which there are two for every edge) in \(G(T_2)\) as {\it new} or {\it existing} according as they do not or do appear in \(G(T_1)\).
Let us also adopt from \cite{bjnrsv-evolution,tl-empirical} the terms {\it internal} for new edges between existing nodes and {\it external} for edges linking new nodes to existing nodes.
Take \(m_k\) to be the number of new incidences of existing nodes of degree \(k\) to external edges, so that \(m=\sum_km_k\) is the total number of external edges.
Then estimate
\[R_k=P_k\cdot\frac{n}{n_k}\approx\frac{m_k/m}{n_k/n}\text.\]
Under linear preferential attachment, this method should produce a linear relationship between \(R_k\) and \(k\).
For an evolving network in which more recent collaborations are viewed more favorably, a fixed-length window \([T_0,T_1]\) may be slid across a longer interval of data, as we do here.

Barab\'asi et al \cite{bjnrsv-evolution} call this process {\it external preferential attachment} to distinguish it from the attachment of existing nodes to other existing nodes as a network evolves, which they call {\it internal preferential attachment}.
This phenomenon may be defined and measured using a similar method:
Take \(\Pi(k_1,k_2)\) to be the probability that two unlinked existing nodes of degree \(k_1\) and \(k_2\).
Internal preferential attachment has been modeled similarly to external, as
\[\Pi(k_1,k_2)\propto(k_1k_2)^\gamma\text,\]
which, using the method of \cite{bjnrsv-evolution}, produces a linear locus on a log-log plot of \(R_{k_1,k_2}\) versus \(k_1k_2\).
By a procedure analogous to the external case, we may approximate
\[R_{k_1,k_2}\approx\frac{m_{k_1,k_2}/m}{n_{k_1,k_2}/n}\text,\]
the influence of both degrees on the probability of two nodes becoming linked, with \(n_{k_1,k_2}\), \(n\), \(m_{k_1,k_2}\), and \(m\) defined similarly to their analogs in the external discussion.

\subsection{Preferential attachment in coauthorship graphs}

In the MR network we adopt a 5-year interval for \([T_0,T_1]\) with increments of \(\Delta T=\) 1 year.
The coarse time units of the data (one year) prevent finer increments, and the duration of our data (25 years) limits the length of an ``existing'' interval than can be slid across the duration to produce useful time series data.
Moreover, the decay of expertise implies that only a limited number of years prior to new collaborations will have useful predictive power.

\begin{figure}
\centerline{
\includegraphics[trim = 0cm 0cm 0cm 0cm, clip = true, width = \columnwidth]{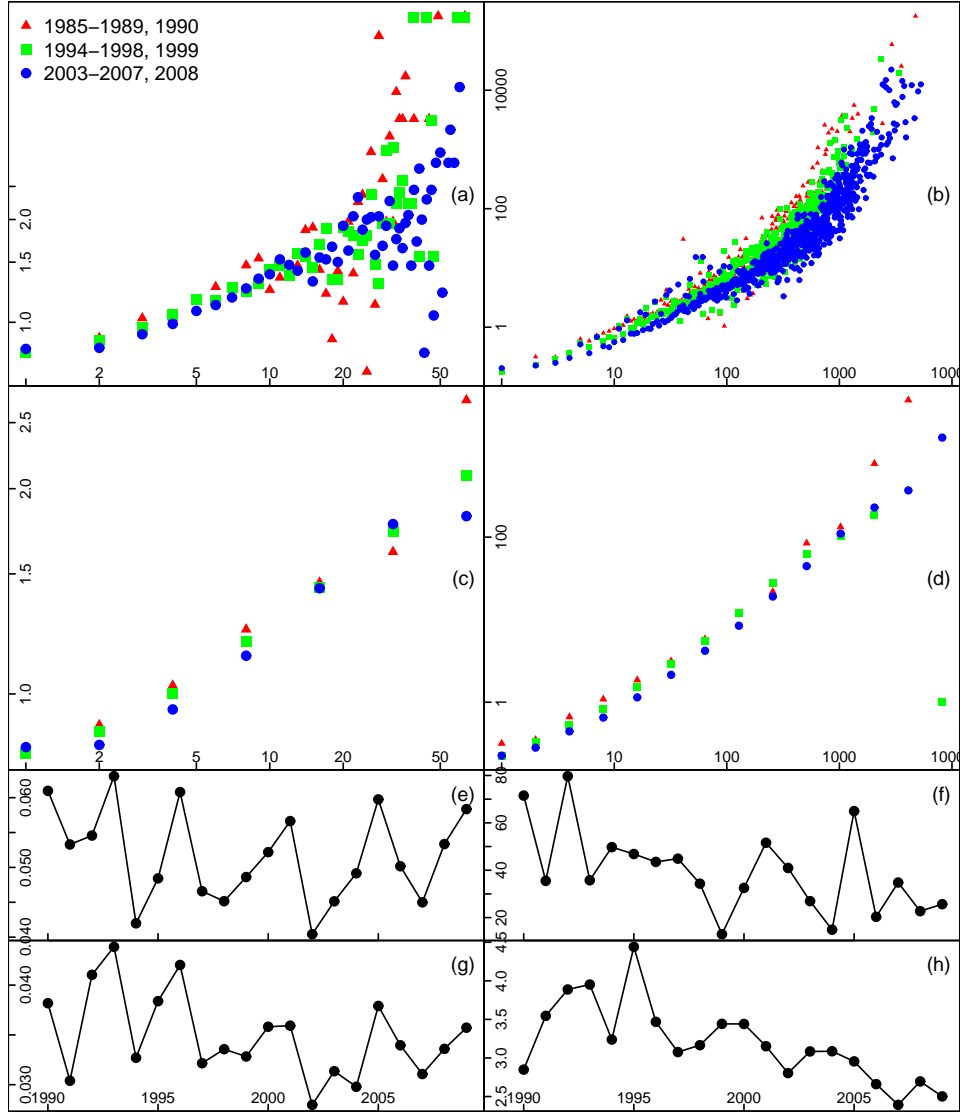}
}
\caption{(a) Plots of the relative probability \(R_k\) of linking by external preferential attachment for three 6-year intervals up to \(k=2^6\).
(b) Plots of \(R_{k_1,k_2}\) for internal preferential attachment for the same intervals.
(c) \(R_k\) calculated from nodes pooled using logarithmic binning.
(d) \(R_{k_1,k_2}\) using logarithmic binning.
(e) Variance of \(R_k\) across nodes.
(f) Variance of \(R_{k_1,k_2}\) across unlinked pairs of nodes.
(g) \(\Var R_k\) restricted to nodes of degree \(\leq 2^4\).
(h) \(\Var R_{k_1,k_2}\) restricted to pairs of nodes with degree product \(\leq 2^9\).
All calculations were done on largest components.\label{prefattMR}}
\end{figure}

Fig.~\ref{prefattMR} (a) plots \(R_k\) against \(k\) across \(1\leq k\leq 64\) for three of these 6-year windows, showing consistent behavior across our interval. In (b) the same three windows show similarly consistent behavior in \(R_{k_1,k_2}\), though with subtle yet perceptible weakening shift.
To reduce visual noise, we pool values of \(k\) (respectively, \(k_1k_2\)) into logarithmic bins \(\{1\}\), \(\{2\}\), \([3,4]\), \([5,8]\), and so on and plotted aggregate ``relative probabilities'' for the collection of nodes (resp., pairs of nodes) associated with each bin.
This also has the effect of removing the false impression that the \(R_{k_1,k_2}\)--\(k_1k_2\) relationship steepens after about \(k_1k_2=200\), which is an artifact of the decreasing number of pairs of nodes of any specific product of degrees.
While the trends could be modeled linearly, they exhibit clearly nonlinear behavior, in particular higher values of \(R_k\) (resp. \(R_{k_1,k_2}\)) for low-degree nodes (pairs) than would be predicted by extrapolating from the higher-degree nodes or pairs.

Given such data along a sliding window, the question concluding the previous section amounts to whether a suitable measure of the strength of the preferential attachment mechanism increases over time.
It would be possible to compare linked nodes and pairs directly, viewed as samples from weighted distributions (as approximated by relative probabilities), against a null hypothesis of equal weight; but this would not allow us to view trends in the patterns of these weights.
If we assume that the underlying mechanism takes the form (\ref{batree}) then we may use the established trick of fitting a linear curve to the log-log plots of \(R_k\) and \(R_{k_1,k_2}\) or, again to reduce noise, to those of their partial sums \(S_K=\sum_{k=1}^KR_k\) and \(S_K=\sum_{k_1k_2\leq K}R_{k_1,k_2}\), similarly to \cite{bjnrsv-evolution}.
The slope of the best-fit line gives an approximation to \(\gamma\) (respectively, \(\gamma+1\)).
However, the nonlinearity of the log-log plots themselves suggests that the traditional model (\ref{batree}) is inadequate and prompts our use of a different metric.
(We did fit lines to \(S_K\) in both cases to approximate \(\gamma\) over time, cutting data at \(k\leq 32\) and \(k_1k_2\leq 256\). A linear fit to the time series of best-fit exponents produced no evidence of a nonconstant trend.)

Instead, we consider the variance of \(R_k\).
(The internal case is similar to the external, which we cover here.)
By definition, the average value of \(R_k\) across nodes is \(1\).
Let us make the assumption that the distribution \(R_k\) for each year is not fixed but drawn from a family of distributions \(R_k^{(i)}\) related by
\[R_k^{(i)}=(R_k^{(j)})^{\kappa_{ij}}\text,\]
slightly more modest than (\ref{batree}) and including it as a special case.
We may parametrize this family as \((R_k^{(1)})^t\) across \(-\infty<t<\infty\), and if we restrict ourselves to positive \(t\) (which would mean restricting (\ref{batree}) to positive, or negative, \(\gamma\)) then \(\Var(R_k^{(1)})^t\) will increase with \(t\).
Accordingly, we propose to measure the strength of preferential attachment over time by this variance.
To account for the influence of high-degree nodes, we compute time series for external preferential attachment three ways: including all nodes; including only nodes of degree \(\leq 2^6\), and including only nodes of degree \(\leq 2^4\).
For internal preferential attachment, we likewise use no cutoff and cutoffs of \(2^{13}\) and \(2^9\) on the product \(k_1k_2\).
We then fit a linear model to each time series and compute a two-sided \(p\)-value against the null hypothesis that its slope \(m=0\).

The time series with no cutoff and with the lower cutoff in both cases comprise Fig.~\ref{prefattMR} (e--h).
The \(p\)-values arising from each significance test comprise Table~\ref{pvalues}.
Each suggests a downward trend and produces a linear fit with negative slope, and in both cases \(p\)-values shrink as the cutoff decreases.
The significance is substantially greater in the case of internal preferential attachment.

\begin{table}
\centering
\small
\begin{tabular}{rr|rr}
cutoff & external (\(\Var R_k\)) & cutoff & internal (\(\Var R_{k_1,k_2}\)) \\
\hline
none & .47\phantom{0} & none & .015\phantom{00} \\
64 & .35\phantom{0} & 8192 & .0036\phantom{0} \\
16 & .087 & 512 & .00069 \\
\end{tabular}
\caption{Significance (\(p\)-values) of the linear fit to the time series of \(\Var R_k\) and \(\Var R_{k_1,k_2}\), omitting nodes from the calculation at different cutoffs.\label{pvalues}}
\end{table}

\section{Summary and further work}

A rapid approximation to the \(\smax\) graph should allow network researchers to take fuller advantage of the \(S\)-metric to distinguish among large real-world networks, particularly those with approximately scaling degree sequences.
We took the opportunity to view the range of behaviors exhibited by Barab\'asi--Albert trees of up to \(2^{14}\) nodes across a spectrum of exponents.
The \(s\)-values of these BA trees are confined within a narrow band between \(\smin\) and \(\smax\).
Moreover, as variation in degree sequences increases with \(\gamma\), the ratio of smallest to largest \(s\)-values across graphs fixed at each degree sequence reaches a minimum near \(\gamma=1\).
Despite great diversity among the {\it Mathematical Reviews} coauthorship graphs, these networks exhibit a limited range of assortativity given their degree sequence, which is similar to the band attained by BA trees.
There has been a long-term decline in \(S\), coupled with rising variance in degree sequence, that is not explained by changes in degree-based preferential attachment.
This mechanism has, if anything, been weakening in recent years.
Further investigation into trends in \(S\) over time and possible real-world factors may provide greater insight into the mechanisms of real-world network evolution and a more robust understanding of scale-freeness itself.

\bibliographystyle{plain}
\bibliography{s_max}

\begin{thebibliography}{10}

\bibitem{ba-emergence}
A.~L. Barabasi and R.~Albert.
\newblock Emergence of scaling in random networks.
\newblock {\em Science}, 286:509--512, 1999.

\bibitem{bjnrsv-evolution}
A.~L. Barab{\'a}si, H.~Jeong, Z.~N{\'e}da, E.~Ravasz, A.~Schubert, and
  T.~Vicsek.
\newblock Evolution of the social network of scientific collaborations.
\newblock {\em Phys. A}, 311(3-4):590--614, 2002.

\bibitem{bc-generating}
Isabel Beichl and Brian Cloteaux.
\newblock Generating network models using the s-metric.
\newblock In {\em Int. Conf. on Modeling, Simulation \& Visualization Methods},
  pages 159--164, 2008.

\bibitem{bfmnrfil-evolutionary}
J.~C. Brunson, S.~Fassino, A.~Mc{I}nnes, M.~Narayan, B.~Richardson, C.~Franck,
  P.~Ion, and R.~Laubenbacher.
\newblock Evolutionary events in a mathematical sciences research collaboration
  network.
\newblock {\em Submitted}.

\bibitem{eg-graphs}
Paul Erd\H{o}s and Tibor Gallai.
\newblock Graphs with prescribed degree of vertices.
\newblock {\em Mat. Lapok}, 11:264--274, 1960.

\bibitem{ilms-quick}
A.~Iv\'anyi, L~Lucz, F.T. M\'ori, and P.~S\'ot\'er.
\newblock Quick {E}rd{\H o}s--{G}allai algorithms.
\newblock {\em Acta Univ Sapientiae, Inform}, 3(2):230--268, 2011.

\bibitem{jnb-measuring}
H.~Jeong, Z.~N\'eda, and A.~L. Barab\'asi.
\newblock Measuring preferential attachment in evolving networks.
\newblock {\em EPL (Europhysics Letters)}, 61(4):567, 2003.

\bibitem{l-topologies}
Lun Li.
\newblock {\em Topologies of complex networks: functions and structures}.
\newblock PhD thesis, California Institute of Technology, 2007.

\bibitem{ladw-towards}
Lun Li, David Alderson, John~C. Doyle, and Walter Willinger.
\newblock Towards a theory of scale-free graphs: definition, properties, and
  implications.
\newblock {\em Internet Math.}, 2(4):431--523, 2005.

\bibitem{n-clustering}
M.~E.~J. Newman.
\newblock Clustering and preferential attachment in growing networks.
\newblock {\em Phys. Rev. E}, 64:025102, 2001.

\bibitem{n-mixing}
M.~E.~J. Newman.
\newblock Mixing patterns in networks.
\newblock {\em Phys. Rev. E (3)}, 67(2):026126, 13, 2003.

\bibitem{tl-empirical}
Marco Tomassini and Leslie Luthi.
\newblock Empirical analysis of the evolution of a scientific collaboration
  network.
\newblock {\em Physica A: Statistical Mechanics and its Applications},
  385(2):750--764, 2007.

\bibitem{tv-note}
Amitabha Tripathi and Sujith Vijay.
\newblock A note on a theorem of {E}rd{\H o}s {$\&$} {G}allai.
\newblock {\em Discrete Math.}, 265(1-3):417--420, 2003.

\end{thebibliography}

\end{document}